\documentclass{amsart}

\usepackage{amsmath,amssymb,amsthm}
\usepackage{tikz}

\newtheorem{thm}{Theorem}[section]
\newtheorem{theorem}[thm]{Theorem}
\newtheorem{lemma}[thm]{Lemma}
\newtheorem{proposition}[thm]{Proposition}
\newtheorem{corollary}[thm]{Corollary}
\newtheorem*{maintheorem}{Main Theorem}
\theoremstyle{definition}
\newtheorem{definition}[thm]{Definition}
\newtheorem{remark}[thm]{Remark}

\newtheorem{conjecture}[thm]{Conjecture}

\newcommand{\ind}[1]{\mathbf{1}(#1)}
\newcommand{\Ann}{\operatorname{Ann}}
\def\cm#1{\mathrm{M}\left(#1\right)}
\newcommand{\rank}{\operatorname{rank}}
\newcommand{\Kk}{\mathbb{K}}
\newcommand{\Kir}{\operatorname{Kir}}
\newcommand{\Rr}{\mathcal{R}}
\newcommand{\Aa}{\mathcal{A}}

\title[Strong Lefschetz Property of the Kirchhoff algebra of $K_{1,1,n}$]{Strong Lefschetz property of the Kirchhoff algebra of the complete tripartite graph $K_{1,1,n}$}

\author{Yohji Akama}
\address{Mathematical Institute, Graduate School of Science, Tohoku University, Aramaki, Sendai, Miyagi, 980-8578, Japan}
\email{yoji.akama.e8@tohoku.ac.jp}

\date{\today}

\keywords{Strong Lefschetz property, Whitney's 2-isomorphism theorem for graphic matroids, Johnson scheme}

\subjclass{Primary 13E10; Secondary 05B35, 05C50, 05E30}

\begin{document}

\begin{abstract}
  By Whitney's 2-isomorphism theorem, the cycle matroids of two graphs are isomorphic if and only if one graph can be transformed into the other by finitely many applications of two operations: separation or identification at a cut-vertex, and twisting. The complete tripartite graph whose vertex set is the disjoint union of two singletons and an $n$-element set is invariant, up to isomorphism, under these operations. In this paper we prove that the Macaulay inverse system (the Kirchhoff algebra) determined by the basis generating polynomial of the cycle matroid of such a graph has the strong Lefschetz property over a field of characteristic $0$. The proof reduces the non-degeneracy of the higher Hessians to the ranks of disjointness matrices of families of subsets (the Gottlieb--Wilson theorem) and to the representation theory of the symmetric group in relation to the Johnson scheme.
\end{abstract}

\maketitle

\section{Introduction}

The strong Lefschetz property of graded Artinian rings originates from the hard Lefschetz theorem for the cohomology rings of complex projective varieties.
A standard graded Artinian Gorenstein algebra $A=\bigoplus_{k=0}^{d}A_k$ over a field $\Kk$ of characteristic $0$ is said to have the \emph{strong Lefschetz property} (SLP) if there exists $\ell\in A_1$ such that
\[
  \times\ell^{\,d-2k}\colon A_k\longrightarrow A_{d-k}
\]
is an isomorphism for every $0\le k\le d/2$. Such an $\ell$ is called a \emph{Lefschetz element}.
A standard graded Artinian Gorenstein algebra can be written as the quotient $\Rr/\Ann(f)$ of the ring $\Rr=\Kk[\partial_1,\dots,\partial_N]$ of differential operators by the annihilator ideal $\Ann(f)=\{\alpha\in\Rr:\alpha f=0\}$ of a homogeneous polynomial $f$ of degree $d$ in $N$ variables (Macaulay's inverse system).
By Maeno--Watanabe \cite{MR2594646}, the SLP is characterized by the non-degeneracy of the higher Hessians of $f$.
Namely, for a basis $\mathcal{B}_k=\{\alpha_1,\dots,\alpha_{\dim A_k}\}$ of $A_k$, consider the matrix $H_{k,\mathcal{B}_k}(f)=(\alpha_i\alpha_j f)_{i,j}$ whose entries are homogeneous polynomials of degree $d-2k$.
Then, for $p=(p_1,\dots,p_N)\in\Kk^N$, the element $\ell=\sum_i p_i\partial_i$ is a Lefschetz element if and only if $\det H_{k,\mathcal{B}_k}(f)(p)\ne0$ for all $k\le d/2$.
Here $H_{k,\mathcal{B}_k}(f)(p)$ denotes the matrix obtained from $H_{k,\mathcal{B}_k}(f)$ by substituting $x_i=p_i$ into each entry.
The case $k=1$ is the ordinary Hessian.

Maeno and Numata \cite{MR2985388,MR3566530} considered the Artinian Gorenstein algebra $\Rr/\Ann(f_M)$ determined by the basis generating polynomial
\[
  f_M=\sum_{B:\ \text{a basis of }M}\ \prod_{e\in B}x_e
\]
of a matroid $M$.
This algebra is called the \emph{Kirchhoff algebra} of $M$, and we denote it by $\Kir(M)$ in this paper.
They conjectured the following.
\begin{conjecture}[Maeno--Numata]\label{conj:MN}
For every matroid $M$, the Kirchhoff algebra $\Kir(M)$ has the strong Lefschetz property over a field of characteristic $0$.
\end{conjecture}

The bases of the cycle matroid $\cm{G}$ of a graph $G$ are the maximal spanning forests of $G$, which are the spanning trees if $G$ is connected.
In the latter case, the polynomial $f_{\cm{G}}$ is strictly log-concave, from which the condition for $k=1$ in the definition of the SLP follows (Nagaoka--Yazawa \cite{MR4234203}).
Yazawa \cite{MR4348920} proved that the cycle matroids of the complete graphs and the complete bipartite graphs with at most five vertices satisfy the Maeno--Numata conjecture, and Nagaoka-Wachi \cite{NagaokaWachi} demonstrated SLP for Artinian Gorenstein algebras generated by relative invariants of prehomogeneous vector spaces of commutative parabolic type; since the Kirchhoff polynomial of a complete graph is the determinant of a generic symmetric matrix, this covers the cycle matroids of all complete graphs.

However, Takahashi \cite{MR4974303}, using a computer program based on a database of 2-connected simple graphs, found a $2$-connected simple graph $G_T$ such that $\cm{G_T}$ is a counterexample to the Maeno--Numata conjecture (he explicitly constructed a basis $\mathcal{B}_3$ and an eigenvector corresponding to the eigenvalue $0$ of the higher Hessian $H_{3,\mathcal{B}_3}(f_{\cm{G_T}})$, i.e., the case $k=3$).
\begin{figure}[tb]
    \centering
    \begin{tikzpicture}[scale=.5,every node/.style={circle, fill=black, inner sep=2pt, minimum size=6pt}]
        \node(5) at (0,2){};
        \node(2) at (4,0){};
        \node(8) at (0,0){};
        \node(1) at (-4,0){};
        \node(4) at (2,-2){};
        \node(7) at (4,-4){};
        \node(3) at (-2,-2){};
        \node(6) at (-4,-4){};

        \draw[line width=1pt](1)--(5);
        \draw[line width=1pt](1)--(6);
        \draw[line width=1pt](1)--(7);
        \draw[line width=1pt](1)--(8);
        \draw[line width=1pt](2)--(5);
        \draw[line width=1pt](2)--(6);
        \draw[line width=1pt](2)--(7);
        \draw[line width=1pt](2)--(8);
        \draw[line width=1pt](3)--(6);
        \draw[line width=1pt](3)--(8);
        \draw[line width=1pt](4)--(7);
        \draw[line width=1pt](4)--(8);
        \draw[line width=1pt](5)--(8);
    \end{tikzpicture}
    \caption{The graph $G_T$.
    \label{figure:SLP_3 in introduction}}
\end{figure}
We therefore address the following problem:
\begin{quote}
Find an interesting infinite family $\{G_n\}_{n=1,2,3,\ldots}$ of graphs whose cycle matroids satisfy the Maeno--Numata conjecture.
\end{quote}

The graph $G_T$ of the counterexample $\cm{G_T}$ is not chordal (that is, it contains an induced cycle other than a triangle).
Moreover, $G_T$ contains the complete graph $K_4$ as a minor.
Indeed, in the drawing of $G_T$ in Figure~\ref{figure:SLP_3 in introduction}, a $K_4$ appears as a subgraph in the upper part of the graph obtained by contracting the leftmost vertical edge.
We therefore look for graphs $G$ satisfying the Maeno--Numata conjecture among the chordal graphs that do not contain $K_4$ as a minor.

Now, by Whitney's 2-isomorphism theorem (\cite[Section~5.3]{MR2849819}, \cite{MR1506961}), the cycle matroids of two graphs without isolated vertices are isomorphic if and only if one graph can be transformed into the other by finitely many applications of the following two operations:
\begin{enumerate}
\item[(i)] \emph{Identification of vertices and its inverse, separation at a cut-vertex}:
the operation of identifying two vertices belonging to different connected components, and its inverse, the operation of separating a graph into two graphs at a cut-vertex.
\item[(ii)] \emph{Whitney's twist}:
  Suppose that a graph $G$ is obtained from disjoint graphs $G_1$ and $G_2$ by identifying a vertex $u_1$ of $G_1$ with a vertex $u_2$ of $G_2$ into a vertex $u$ of $G$, and identifying a vertex $v_1$ of $G_1$ with a vertex $v_2$ of $G_2$ into a vertex $v$ of $G$.
  The twist of $G$ with respect to $\{u, v\}$ (also called the Whitney twist) is the graph obtained by instead identifying $u_1$ with $v_2$ and $v_1$ with $u_2$.
\end{enumerate}
For example, in Figure~\ref{fig:twist}, the graph $G'$ is a twist of $G$.
\begin{figure}[ht]\centering
\begin{tikzpicture}[
    vertex/.style={circle, fill=black, inner sep=0pt, minimum size=2.2mm},
    thick,
    scale=0.9
]

\node[vertex] (a) at (0.31,-1.79) {};
\node[vertex] (b) at (1.00,-1.36) {};
\node[vertex] (c) at (1.86,-1.36) {};
\node[vertex,label=left:$u$]  (u) at (1.00,-2.24) {};
\node[vertex,label=right:$v$] (v) at (1.86,-2.24) {};
\node[vertex] (d) at (0.31,-2.83) {};
\node[vertex] (g) at (2.43,-2.85) {};
\node[vertex] (e) at (0.99,-3.56) {};
\node[vertex] (f) at (1.86,-3.56) {};
\draw (a)--(b)--(c)--(v)--(u)--(a);   
\draw (b)--(u);
\draw (u)--(c);
\draw (u)--(d)--(e)--(f)--(g)--(v); 
\draw (d)--(g);
\draw (e)--(g);
\draw (e)--(v);
\node at (1.2,-4.15) {$G$};

\node[vertex] (A1) at (2.99,-0.95) {};
\node[vertex] (A2) at (3.69,-0.52) {};
\node[vertex] (A3) at (4.56,-0.52) {};
\node[vertex,label=below left:$u_1$]  (U1) at (3.69,-1.38) {};
\node[vertex,label=below right:$u_2$] (U2) at (4.56,-1.38) {};
\draw (A1)--(A2)--(A3)--(U2)--(U1)--(A1);
\draw (A2)--(U1);
\draw (U1)--(A3);
\node at (4.1,-0.12) {$G_1$};

\node[vertex,label=above left:$v_1$]  (V1) at (3.73,-2.54) {};
\node[vertex,label=above right:$v_2$] (V2) at (4.56,-2.52) {};
\node[vertex] (P) at (3.05,-3.14) {};
\node[vertex] (S) at (5.15,-3.14) {};
\node[vertex] (Q) at (3.76,-3.84) {};
\node[vertex] (R) at (4.58,-3.86) {};
\draw (V1)--(P)--(Q)--(R)--(S)--(V2);
\draw (P)--(S);
\draw (Q)--(S);
\draw (Q)--(V2);
\node at (3.9,-4.3) {$G_2$};

\node[vertex] (B1) at (5.76,-0.95) {};
\node[vertex] (B2) at (6.46,-0.52) {};
\node[vertex] (B3) at (7.33,-0.52) {};
\node[vertex,label=below left:$u_1$]  (W1) at (6.46,-1.38) {};
\node[vertex,label=below right:$u_2$] (W2) at (7.33,-1.38) {};
\draw (B1)--(B2)--(B3)--(W2)--(W1)--(B1);
\draw (B2)--(W1);
\draw (W1)--(B3);
\node at (6.9,-0.12) {$G_1$};

\node[vertex,label=above left:$v_2$]  (X2) at (6.48,-2.52) {};
\node[vertex,label=above right:$v_1$] (X1) at (7.30,-2.54) {};
\node[vertex] (Pm) at (7.98,-3.14) {};
\node[vertex] (Sm) at (5.88,-3.14) {};
\node[vertex] (Qm) at (7.27,-3.84) {};
\node[vertex] (Rm) at (6.45,-3.86) {};
\draw (X1)--(Pm)--(Qm)--(Rm)--(Sm)--(X2);
\draw (Pm)--(Sm);
\draw (Qm)--(Sm);
\draw (Qm)--(X2);
\node at (6.8,-4.3) {$G_2$};

\node[vertex] (a') at (8.49,-1.81) {};
\node[vertex] (b') at (9.19,-1.38) {};
\node[vertex] (c') at (10.05,-1.38) {};
\node[vertex] (u') at (9.20,-2.27) {};
\node[vertex] (v') at (10.05,-2.27) {};
\node[vertex] (s') at (8.63,-2.92) {};
\node[vertex] (p') at (10.75,-2.92) {};
\node[vertex] (q') at (10.03,-3.58) {};
\node[vertex] (r') at (9.20,-3.58) {};
\draw (a')--(b')--(c')--(v')--(u')--(a');
\draw (b')--(u');
\draw (u')--(c');
\draw (u')--(s')--(r')--(q')--(p')--(v');
\draw (s')--(p');
\draw (s')--(q');
\draw (u')--(q');
\node at (9.4,-4.15) {$G'$};
\end{tikzpicture}
\caption{A twist with respect to the pair $\{u,v\}$ (\cite[Fig.~5.9]{MR2849819}). \label{fig:twist}}
\end{figure}
Moreover, if we apply the operations of the 2-isomorphism theorem to the counterexample $G_T$ to the Maeno--Numata conjecture, then the resulting graph is isomorphic to $G_T$ itself.
The same holds for the complete tripartite graph $K_{1,1,n}$ ($n\ge1$), whose vertex set is the disjoint union of two singletons and one set of $n$ elements (Figure~\ref{fig:k11n}).
\begin{figure}[ht]
  \centering
  \begin{tikzpicture}[font=\large, every node/.style={circle, fill=black, inner sep=2pt, minimum size=6pt}]

  \node (u) [label=above:$u$] at (0, 2) {};
  \node (v) [label=below:$v$] at (0, 0) {};

  \draw (u) -- (v);

  \node (w1) [label=right:$w_1$] at (1, 1) {};
  \node (w2) [label=right:$w_2$] at (2.5, 1) {};
  \node (w3) [label=right:$w_3$] at (4, 1) {};
  \node (wn) [label=right:$w_n$] at (8, 1) {};

  \node [fill=none, draw=none, label=center:{$\dots$}] at (6, 1) {};

  \draw (u) to  (w1);
  \draw (v) to  (w1);
  \draw (u) to  (w2);
  \draw (u) to  (w3);
  \draw (v) to  (w2);
  \draw (v) to  (w3);
  \draw (u) to  (wn);
  \draw (v) to  (wn);

  \end{tikzpicture}
  \caption{The complete tripartite graph $K_{1,1,n}$. \label{fig:k11n}}
\end{figure}
The graph $K_{1,1,n}$ is chordal and does not contain $K_4$ as a minor.\footnote{A graph $G$ has no $K_4$ minor if and only if its treewidth is at most $2$, and this is in turn equivalent to $G$ being a series-parallel graph; these are well-known facts (see, for example, Duffin \cite{MR175809}, Bodlaender \cite{MR1647486}, and Oxley \cite{MR2849819}). We can also consider a series-parallel matroid $M$. A 2-connected
graph is series-parallel if and only if its cycle matroid is a series-parallel matroid, and a loopless connected matroid $M$ is
series-parallel if and only if Crapo’s beta invariant is 1 (see \cite{MR2849819}).}
In this paper we prove the following:

\begin{maintheorem}
    The cycle matroid $\cm{K_{1,1,n}}$ satisfies the Maeno--Numata conjecture for every $n\ge1$.
    That is, $\Kir(\cm{K_{1,1,n}})$ has the strong Lefschetz property over every field of characteristic $0$.
\end{maintheorem}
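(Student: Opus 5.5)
We work directly with the Kirchhoff polynomial of $K_{1,1,n}$ and apply the Maeno--Watanabe criterion at a symmetric point. Label the edges $x_0=uv$, $a_i=uw_i$, $b_i=vw_i$ for $1\le i\le n$. A spanning tree has $n+1$ edges. Each $w_i$ is joined to $\{u,v\}$ by one or both of $a_i,b_i$, and exactly one of the following holds. Either the tree contains $x_0$ and exactly one of $a_i,b_i$ for every $i$. Or it omits $x_0$, uses both $a_j,b_j$ for exactly one $j$, and uses exactly one of $a_i,b_i$ for every other $i$. Writing $s_i=a_i+b_i$ and $p_i=a_ib_i$, this gives
\[
 f=x_0\prod_{i=1}^n s_i+\sum_{j=1}^n p_j\prod_{i\ne j}s_i ,\qquad d=\deg f=n+1 .
\]

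Next we describe $A=\Kir(\cm{K_{1,1,n}})$ degree by degree. A monomial $\partial^\alpha$ survives only if it is squarefree and its support is independent, i.e.\ a forest. Modulo $\Ann(f)$ the relations $\partial_{a_i}\partial_{b_i}\partial_{a_j}\partial_{b_j}=0$ and the $\partial_{x_0}\partial_{a_i}\partial_{b_i}=0$-type relations cut things down. We then choose a basis $\mathcal B_k$ of $A_k$ indexed by data of the form (a $k$-subset $S\subseteq[n]$ with a sign/choice pattern), (a $(k-1)$-subset with $x_0$), and (a $(k-2)$-subset together with one index $j$ carrying $p_j$). Symmetrizing under $a_i\leftrightarrow b_i$ splits $A_k$ into an ``even'' part, spanned by the $s_i$-type operators, and ``odd'' parts, spanned by the $a_i-b_i$ operators. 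Since $f$ is invariant under each swap $a_i\leftrightarrow b_i$, the Hessian $H_k$ evaluated at the point $a_i=b_i=1$, $x_0=t$ is block diagonal with respect to the group $(\mathbb Z/2)^n\rtimes S_n$.

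We evaluate each block at the symmetric point. Applying a product of $\partial_{s}$'s and $\partial_{a-b}$'s over a set $S$ to $f$ and then differentiating by a set $T$ produces entries depending only on $|S\cap T|$, $|S\cup T|$, and whether the $p_j$ and $x_0$ indices lie inside. The odd operators kill $\prod s_i$ and survive only on the $p_j$ terms. So each block becomes a matrix in the Bose--Mesner algebra of the Johnson scheme $J(n,k)$, or a small block matrix over it. Typical entries are of the form ``nonzero iff $S\cap T=\emptyset$'', i.e.\ disjointness matrices. The Gottlieb--Wilson theorem says the disjointness matrix between $k$-subsets and $(d-k)$-type subsets has full rank. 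We diagonalize simultaneously using the $S_n$-isotypic decomposition $\mathbb K^{\binom{[n]}{k}}=\bigoplus_{j\le k}V_{(n-j,j)}$. On each irreducible $V_{(n-j,j)}$, each Johnson basis matrix $A_i$ acts by an Eberlein polynomial value $E_i(j)$. Every block therefore reduces to an explicit $1\times1$, $2\times2$ or $3\times3$ determinant in $t$ with Eberlein/Hahn coefficients.

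The main obstacle is this last step: showing that, for every $k\le d/2$ and every $j\le k$, the resulting small determinant is a nonzero polynomial in $t$. It then suffices to pick $t$ avoiding finitely many roots, or to use a generic point with $a_i\ne b_i$ if the symmetric one degenerates. For the pure disjointness blocks, nonvanishing is exactly Gottlieb--Wilson, since the eigenvalue of the disjointness matrix on $V_{(n-j,j)}$ is $\pm\binom{n-k-j}{k-j}\ne0$ in the relevant range. For the mixed blocks coupling $x_0$, $p_j$ and the $s$-part, I would first try to show that the $t$-leading coefficient is itself a product of such nonzero disjointness eigenvalues, by looking at the $x_0$ term of $f$. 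That would make each determinant a nonzero polynomial in $t$. Only if this fails would I compute the determinants in closed form.
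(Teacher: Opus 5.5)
Your outline has the same skeleton as the paper's argument: a point with $a_i=b_i$, the even/odd splitting under $a_i\leftrightarrow b_i$, the Johnson-scheme decomposition, and Gottlieb--Wilson. But it does not prove the theorem. The step you call the main obstacle, nonvanishing of every block determinant, is left open. Worse, the description of $A_k$ before it would make some of those determinants vanish identically.

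Your third family, $\partial_{a_j}\partial_{b_j}\partial^{I}$ with $|I|=k-2$, is redundant. Indeed $\partial_{a_j}\partial_{b_j}f=\prod_{i\ne j}s_i=\partial_{x_0}\partial_{a_j}f$, so $\partial_{a_j}\partial_{b_j}\partial^{I}\equiv\partial_{x_0}\partial_{a_j}\partial^{I}$ modulo $\Ann(f)$, which already lies in your $x_0$-family. Similarly $\partial_{x_0}(\partial_{a_i}-\partial_{b_i})f=0$ and $(\partial_{a_i}-\partial_{b_i})(\partial_{a_j}-\partial_{b_j})f=0$ for $i\ne j$. So on a $k$-set the only surviving patterns are ``all even'' and ``exactly one odd index'', and $\dim A_k=\binom{n}{k-1}+(k+1)\binom{n}{k}$. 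The Gram matrix of a linearly dependent family is singular at every point. So with your third family included, the ``$3\times3$'' determinants you hope to show are nonzero polynomials in $t$ are identically $0$.

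What is really needed is the explicit shape of the Hessian, and that is where the content of the proof lies. Use the coordinates $s_j=a_j+b_j$, $t_j=a_j-b_j$, $z=c+\frac14\sum_j s_j$, with $S_j,T_j,Z$ the corresponding partial derivatives. The shift of $z$ strips the $Z$-component from your even operators $\partial_{a_i}+\partial_{b_i}=2S_i+\frac12 Z$, so every block becomes a pure disjointness matrix. Then $B_n=zs_{[n]}-\frac14\sum_j t_j^2 s_{[n]\setminus\{j\}}$, a basis is $ZS^I$, $T_{i'}S^{I'}$, $S^{I''}$, and at $s=1$, $t=0$ (any $z$):
\begin{itemize}
\item the $x_0$-family paired with itself gives the block $O$, because $Z^2B_n=0$;
\item the $x_0$-family paired with the even family gives $D_{k-1,k}(n)$;
\item the odd block is $-\frac12\bigoplus_j D_{k-1}([n]\setminus\{j\})$;
\item the even--even block is $z\,D_k(n)$.
\end{itemize}

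The zero block is the fact you never establish, and it is what makes everything work:
\begin{itemize}
\item On $V_i^{(k-1)}\oplus V_i^{(k)}$ with $i\le k-1$, the block has the form $\begin{pmatrix}0&m_i\\ m_i&*\end{pmatrix}$. Its determinant $-m_i^2$ does not depend on the diagonal entry, nor on your $t$. It is nonzero because $D_{k-1,k}(n)$ has full row rank for $2k\le n+1$; this is the actual Gottlieb--Wilson input.
\item On $V_k^{(k)}=\ker D_{k-1,k}(n)$, the block acts by $(-1)^k z$. At your point $z=t+n/2$, so you need $t\ne -n/2$.
\item The odd blocks are invertible because the Kneser eigenvalues $\pm\binom{n-k-i}{k-1-i}$ of $D_{k-1}$ on an $(n-1)$-set are nonzero for $2k\le n+1$.
\end{itemize}

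Your assertion that $\pm\binom{n-k-j}{k-j}\ne0$ ``in the relevant range'' fails for $D_k(n)$ itself when $n=2k-1$, since then $D_k(n)=0$. The argument may only use $D_k(n)$ on $V_k^{(k)}$, which is $0$ in that case. Your leading-coefficient heuristic would go through once this structure is in place, but as written the proposal contains no argument for nondegeneracy.
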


The outline of the proof is as follows.
In Section~\ref{sec:prelim} we recall the criterion for SLP in terms of higher Hessians, and in Section~\ref{sec:poly} we compute the basis generating polynomial $B_n$ of $K_{1,1,n}$ explicitly.
In Section~\ref{sec:gw} we discuss the Gottlieb--Wilson theorem (\cite{MR204305}, \cite{MR1078717}) on the rank of disjointness matrices. Moreover, for the permutation representation of the symmetric group $\mathfrak S_n$ on the subsets of $[n]=\{1,2,\ldots,n\}$, we consider its decomposition into irreducibles, in relation to the Johnson scheme.
In Section~\ref{sec:proof}, by a suitable change of coordinates, we write down a basis of $\Aa_k$ explicitly.
Then the value of the higher Hessian at a point $p$ is expressed as a block matrix built from the disjointness matrices of subsets of $[n]$.
The non-singularity of this block matrix is then proved by means of the results of Section~\ref{sec:gw}.

\section{Preliminaries}\label{sec:prelim}

Let $\Kk$ be a field of characteristic $0$, and let $\Rr=\Kk[X_1,\dots,X_N]$ act on $\Kk[x_1,\dots,x_N]$ by $X_i=\partial/\partial x_i$.
For a nonzero homogeneous polynomial $f$ of degree $d$, put $\Aa=\Rr/\Ann(f)$.
Then $\Aa=\bigoplus_{k=0}^{d}\Aa_k$ is a Gorenstein ring with socle degree $d$, and the pairing $\Aa_k\times\Aa_{d-k}\to\Aa_d\cong\Kk$ induced by the product is perfect.
The ring $\Rr$ is graded by giving each $X_i$ degree $1$, and $\Rr_k$ denotes its homogeneous component of degree $k$ (the subspace of all homogeneous polynomials of degree $k$).
Moreover, $\Aa_k\cong\Rr_k f=\{\alpha f:\alpha\in\Rr_k\}$.
Indeed, the map $\Rr_k\to\Rr_kf$, $\alpha\mapsto\alpha f$, is a surjective linear map whose kernel is $\Ann(f)\cap\Rr_k$.
Since $f$ is homogeneous, $\Ann(f)$ is a homogeneous ideal (the polynomials $\alpha_jf$ have different degrees for different homogeneous components $\alpha_j$), and $\Aa_k=\Rr_k/(\Ann(f)\cap\Rr_k)$, so this map induces an isomorphism $\Aa_k\cong\Rr_kf$.

\begin{lemma}[Maeno--Watanabe \cite{MR2594646}]\label{lem:MW}
Let $p=(p_1,\dots,p_N)\in\Kk^N$ and put $\ell=\sum_ip_iX_i\in\Aa_1$.
Let $(\alpha\beta f)(p)$ denote the value obtained by substituting $x_i=p_i$ ($1\le i\le N$) into the polynomial $\alpha\beta f$.
Let $m=d-2k\ge0$.
Then $\times\ell^{m}\colon\Aa_k\to\Aa_{d-k}$ is an isomorphism if and only if the bilinear form
\[
  Q_k(\alpha,\beta)=(\alpha\beta f)(p)\qquad(\alpha,\beta\in\Aa_k)
\]
is non-degenerate.
\end{lemma}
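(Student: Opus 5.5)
The plan is to identify the map $\times\ell^m$ with the linear map attached to $Q_k$ through the perfect pairing of the Gorenstein ring $\Aa$. Since $\dim\Aa_k=\dim\Aa_{d-k}$ by the perfect pairing $\Aa_k\times\Aa_{d-k}\to\Aa_d\cong\Kk$, the map $\times\ell^m$ is an isomorphism if and only if it is injective. It is therefore enough to show that $\ker(\times\ell^m)$ coincides with the radical of $Q_k$.

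The first step is a Taylor/Euler computation. For a homogeneous $g$ of degree $m$ we have
\[
\ell^m g = m!\, g(p),
\]
because $\ell=\sum_i p_iX_i$ is the directional derivative in the direction $p$. Applying this with $g=\alpha\beta f$, which has degree $d-2k=m$, gives
\[
\ell^m\alpha\beta f=m!\,Q_k(\alpha,\beta).
\]
The identification $\Aa_d\cong\Kk$ is given by $\gamma\mapsto\gamma f$, a constant. Under it, the pairing of $\ell^m\alpha\in\Aa_{d-k}$ with $\beta\in\Aa_k$ equals $m!\,Q_k(\alpha,\beta)$. In characteristic $0$ we have $m!\neq0$.

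Next we conclude by nondegeneracy. Suppose $\alpha\in\Aa_k$ satisfies $\ell^m\alpha=0$ in $\Aa$. Then $Q_k(\alpha,\beta)=0$ for all $\beta$. Conversely, suppose $Q_k(\alpha,\cdot)=0$. Then $\ell^m\alpha$ pairs to zero with all of $\Aa_k$. By perfectness of the pairing $\Aa_{d-k}\times\Aa_k\to\Kk$, this forces $\ell^m\alpha=0$ in $\Aa_{d-k}$, i.e.\ $\ell^m\alpha f=0$. Thus the kernel of $\times\ell^m$ equals the radical of $Q_k$. Consequently $\times\ell^m$ is injective, hence bijective, exactly when $Q_k$ is non-degenerate.

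The well-definedness of $Q_k$ on $\Aa_k$ is immediate: if $\alpha\in\Ann(f)$, then $\alpha\beta f=\beta\alpha f=0$. The one point needing care is perfectness of the pairing. This follows from $\Aa_k\cong\Rr_kf$: if $\alpha\gamma f=0$ for all $\gamma\in\Rr_{d-k}$, then the degree-$k$ polynomial $\alpha f$ has all derivatives of order $d-k$ equal to zero. The only polynomial of degree $d-k$ killed by all order-$(d-k)$ derivatives is $0$, so $\alpha f=0$. I expect no real obstacle; the only subtle points are the use of characteristic $0$ (so $m!\ne0$) and the dimension equality.
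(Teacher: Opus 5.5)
Your proof is correct and follows the same route as the paper: use the perfect pairing to turn the question into non-degeneracy of $(\alpha,\beta)\mapsto\ell^m\alpha\beta f$, then use $\ell^m g=m!\,g(p)$ and $m!\neq0$. You also write out the kernel–radical identification and a proof of perfectness, both of which the paper takes as standard; there is one small slip in the latter, since for $\alpha\in\Rr_k$ the polynomial $\alpha f$ has degree $d-k$, not $k$ (your next sentence already uses the correct degree).
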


\begin{proof}
Since the pairing $\Aa_k\times\Aa_{d-k}\to\Kk$ is perfect, $\times\ell^m$ is an isomorphism if and only if the bilinear form $(\alpha,\beta)\mapsto\alpha\beta\ell^m f$ on $\Aa_k$ is non-degenerate (here $\alpha\beta\ell^m f$ is a constant, since $\alpha\beta\ell^m$ has degree $d$).
Since $\alpha\beta f$ is a homogeneous polynomial of degree $m$, we have $\ell^m(\alpha\beta f)=m!\,(\alpha\beta f)(p)$, and $m!\ne0$ because the characteristic is $0$.
\end{proof}

Hence $\Aa$ has the SLP with Lefschetz element $\ell$ if and only if $Q_k$ is non-degenerate for every $0\le k\le d/2$.

\section{The basis generating polynomial of $K_{1,1,n}$}\label{sec:poly}

Let $u,v,w_1,\dots,w_n$ be the vertices of $K_{1,1,n}$, and assign a variable $c$ to the edge $uv$, a variable $a_i$ to the edge $uw_i$, and a variable $b_i$ to the edge $vw_i$.
Every spanning tree has $n+1$ edges.
We distinguish cases according to whether the edge $uv$ is used.

\begin{itemize}
\item If $uv$ is used, then $u$ and $v$ are already joined, so each $w_i$ is attached by exactly one of $a_i$ and $b_i$. The contribution is $c\prod_{i=1}^n(a_i+b_i)$.
\item Otherwise, $u$ and $v$ are joined through exactly one $w_j$ using both $a_j$ and $b_j$, and each of the other $w_i$ is attached by exactly one of $a_i$ and $b_i$. The contribution is $\sum_{j=1}^n a_jb_j\prod_{1\le i\le n, i\ne j}(a_i+b_i)$.
\end{itemize}

Therefore
\begin{equation*}
  B_n=c\prod_{i=1}^n(a_i+b_i)+\sum_{j=1}^na_jb_j\prod_{1\le i\le n, i\ne j}(a_i+b_i),
\end{equation*}
which is a homogeneous polynomial of degree $n+1$.
Hereafter we put $\Aa=\Rr/\Ann(B_n)$; it is a Gorenstein ring with socle degree $d=n+1$.

\section{Disjointness matrices and the Gottlieb--Wilson theorem}\label{sec:gw}

For a condition $\Theta$, let $\ind{\Theta}$ denote $1$ if $\Theta$ holds and $0$ otherwise.

\begin{definition}[Disjointness matrix]\label{def:disj}
Let $X$ be a finite set and let $t,k$ be non-negative integers at most $|X|$.
We define the matrices
\[
  D_{t,k}(X)=\bigl(\ind{I\cap J=\emptyset}\bigr)_{I\in\binom{X}{t},\,J\in\binom{X}{k}},\qquad
  D_k(X)=D_{k,k}(X).
\]
In particular, for $X=[n]$ we write $D_{t,k}(n)=D_{t,k}([n])$ and $D_k(n)=D_k([n])$.
\end{definition}

The matrix $D_k(n)$ is the adjacency matrix of the Kneser graph $K(n,k)$ (see, e.g., \cite[p.~24]{MR3497070}).
The \emph{inclusion matrix} is
$$W_{t,k}(n)=\bigl(\ind{I\subseteq J}\bigr)_{I\in\binom{[n]}{t},\,J\in\binom{[n]}{k}}$$
(see \cite[p.~61, p.~115]{MR3497070}).

For a matrix $Y$, let $F_Y$ denote the linear map given by multiplying a column vector by $Y$ from the left, and let $\rank_{\Kk}Y$ denote the rank over $\Kk$ (for integer matrices it coincides with the rank over $\mathbb Q$, since $\operatorname{char}\Kk=0$).

\begin{theorem}[Wilson~\protect{\cite{MR1078717}}]\label{thm:GW}
Let $0\le t\le k\le n-t$.
  There exist unimodular integer matrices $P,P'$ such that $P\,W_{t,k}(n)\,P'$ is a diagonal matrix (with the remaining columns zero) whose diagonal entries are
  \[
    \binom{k-i}{t-i}\quad\Bigl(\text{with multiplicity }\tbinom ni-\tbinom n{i-1}\Bigr),\qquad i=0,1,\dots,t.
  \]
\end{theorem}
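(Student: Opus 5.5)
The plan is to compute the Smith normal form of $W_{t,k}(n)$ by building a basis of $\mathbb Z^{\binom{[n]}{t}}$ adapted to the ``levels'' $i=0,\dots,t$. We then show that, in this basis, multiplication by $W_{t,k}(n)^{\mathsf T}$ acts on level $i$ by the scalar $\binom{k-i}{t-i}$, up to a unimodular change of basis on the target side. The basic identity we use throughout is the counting relation
\[
  W_{i,t}(n)\,W_{t,k}(n)=\binom{k-i}{t-i}\,W_{i,k}(n)\qquad(0\le i\le t\le k),
\]
which holds because an $i$-set $I\subseteq J$ with $|J|=k$ lies in exactly $\binom{k-i}{t-i}$ sets $T$ of size $t$ with $I\subseteq T\subseteq J$. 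This identity is what forces the diagonal entries $\binom{k-i}{t-i}$ to appear.

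First, we establish the rational picture. The row space of $W_{t,k}(n)$ over $\mathbb Q$ has dimension $\binom nt$ when $t\le k\le n-t$ (Gottlieb--Kantor full rank). It is filtered by the subspaces $V_i=$ row space of $W_{i,k}(n)$, with $V_0\subseteq V_1\subseteq\dots\subseteq V_t$ and $\dim V_i/V_{i-1}=\binom ni-\binom n{i-1}$. The inclusions follow from the identity above with the roles of $t$ and $i$ exchanged. To get an integral version, choose for each $i$ a set $\mathcal G_i\subseteq\binom{[n]}{i}$ of ``Gottlieb'' $i$-sets $\{a_1<\dots<a_i\}$ with $a_j\ge 2j$ (or an equivalent ballot-type family), of size $\binom ni-\binom n{i-1}$.

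The key lemma I would aim for is integrality. The vectors $\{W_{i,t}(n)^{\mathsf T}e_I : I\in\mathcal G_i,\ 0\le i\le t\}$ should form a $\mathbb Z$-basis of $\mathbb Z^{\binom{[n]}{t}}$. Likewise, the vectors $\{W_{i,k}(n)^{\mathsf T}e_I: I\in\mathcal G_i,\ 0\le i\le t\}$ should extend to a $\mathbb Z$-basis of $\mathbb Z^{\binom{[n]}{k}}$, meaning they span a primitive sublattice. I would prove both by an induction on $n$ via the Pascal split $\binom{[n]}{t}=\binom{[n-1]}{t}\sqcup\{T\cup\{n\}\}$. This split gives $W_{t,k}(n)$ a $2\times2$ block triangular form in terms of $W_{t,k}(n-1)$ and $W_{t-1,k-1}(n-1)$. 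The ordering by $a_j\ge 2j$ should make the relevant square minors unitriangular.

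Granting the lemma, the diagonalization is immediate. Applying $W_{t,k}(n)^{\mathsf T}$ to $W_{i,t}(n)^{\mathsf T}e_I$ gives $\binom{k-i}{t-i}W_{i,k}(n)^{\mathsf T}e_I$ by the identity above. Taking $P'^{\mathsf T}$ to be the change of basis to the first family and $P^{\mathsf T}$ the change of basis to the completed second family produces the stated diagonal form, with multiplicities $|\mathcal G_i|$. The hypothesis $k\le n-t$ is needed for the primitivity and independence of the second family, which is where full rank enters.

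I expect the main obstacle to be the integrality lemma, especially primitivity of the target family in $\mathbb Z^{\binom{[n]}{k}}$. Rank arguments only give a $\mathbb Q$-basis. If the direct Pascal-induction triangularity does not work, my fallback is Wilson's original device. That device shows the $\mathbb Z$-module $\mathbb Z^{\binom{[n]}{k}}/\operatorname{im}$ has the correct order by computing $\gcd$'s of maximal minors, using the filtration $V_i$ and a unimodular basis of each quotient $V_i\cap\mathbb Z^{\binom{[n]}{k}}/V_{i-1}\cap\mathbb Z^{\binom{[n]}{k}}$.
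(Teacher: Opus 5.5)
The paper does not prove this theorem; it quotes it from Wilson, so your argument has to stand on its own. Write $\mathcal G_{\le s}(n)=\bigsqcup_{i\le s}\mathcal G_i(n)$ and $B(n,s,k)=\bigl(\ind{I\subseteq K}\bigr)_{I\in\mathcal G_{\le s}(n),\,K\in\binom{[n]}{k}}$.

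Your reduction is correct. Call your first claim (A): $B(n,t,t)$ is unimodular. Call your primitivity claim (B): $F_{B(n,t,k)}\colon\mathbb Z^{\binom{[n]}{k}}\to\mathbb Z^{\mathcal G_{\le t}(n)}$ is surjective. Given both, $B(n,t,t)\,W_{t,k}(n)=\Delta\,B(n,t,k)$ with $\Delta=\operatorname{diag}\bigl(\binom{k-|I|}{t-|I|}\bigr)$. Then $P=B(n,t,t)$ and any unimodular $P'$ with $B(n,t,k)P'=(I\mid 0)$ give the diagonal form. (Your $P,P'$ are swapped: the left factor has size $\binom nt$.)

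The gap is that (A) and (B) are only asserted, and (B) carries the whole arithmetic content. Given (A), (B) is equivalent to the $\gcd$ of the maximal minors of $W_{t,k}(n)$ being $\prod_i\binom{k-i}{t-i}^{\binom ni-\binom n{i-1}}$. As sketched, your induction also has two defects:
\begin{enumerate}
\item The Pascal split sends $(n,t,k)$ to $(n-1,t,k)$ and $(n-1,t-1,k-1)$. At $k=n-t$ the first of these leaves the range $k\le(n-1)-t$; for $2t<n$, $W_{t,n-t}(n-1)$ even has more rows than columns. This is exactly the boundary of the hypothesis $k\le n-t$.
\item Unimodularity does not come from unitriangular minors. $B(4,2,2)$, with rows $\emptyset,\{2\},\{3\},\{4\},\{2,4\},\{3,4\}$, has determinant $\pm1$. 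But every column has a $1$ from row $\emptyset$ and a $1$ from some row $\{x\}$ with $x\in\{2,3,4\}$, so no permutation makes it triangular.
\end{enumerate}
The fallback is too unspecific to assess.

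Your plan can be completed as follows. Prove by induction on $n$ that $F_{B(n,s,k)}$ is surjective for \emph{all} $0\le s\le k\le n-s$; for $s=0$ the matrix is the all-ones row.
\begin{enumerate}
\item Case $k\le n-1-s$. Since $\mathcal G_i(n)=\mathcal G_i(n-1)\sqcup\{I'\cup\{n\}:I'\in\mathcal G_{i-1}(n-1)\}$ for $2i\le n$,
\[
  B(n,s,k)=\begin{pmatrix}B(n-1,s,k)&B(n-1,s,k-1)\\0&B(n-1,s-1,k-1)\end{pmatrix}.
\]
A block upper triangular integer matrix with surjective diagonal blocks is surjective, whatever the off-diagonal block is.
\item Case $k=n-s$ and $2s<n$. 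Put $T=[n]\setminus K$ and use $\ind{I\cap T=\emptyset}=\sum_{L\subseteq I}(-1)^{|L|}\ind{L\subseteq T}$. Since $\mathcal G_{\le s}(n)$ is closed under taking subsets, $B(n,s,n-s)$ equals, up to a column permutation, $U\,B(n,s,s)$. Here $U_{I,L}=(-1)^{|L|}\ind{L\subseteq I}$ is triangular with diagonal $\pm1$, and $B(n,s,s)$ falls under the first case.
\item Case $k=s=n/2$. Then $\mathcal G_s(n-1)=\emptyset$, so the split has diagonal blocks $B(n-1,s-1,s)$ (a boundary instance for $n-1$) and $B(n-1,s-1,s-1)$.
\end{enumerate}
Then (A) is the case $s=k=t$, where the matrix is square because $|\mathcal G_{\le t}(n)|=\binom nt$, and (B) is the case $s=t$.
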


Since $\binom{k-i}{t-i}\ne0$ for $0\le i\le t\le k$, and the sum of the multiplicities is $\sum_{i=0}^{t}\bigl(\binom ni-\binom n{i-1}\bigr)=\binom nt$, we obtain the following.

\begin{corollary}\label{cor:fullrank}
Let $t\le k\le n-t$. Then $\rank_{\Kk}W_{t,k}(n)=\binom nt$
(that is, $W_{t,k}(n)$ has full row rank, the row vectors are linearly independent, and $F_{W_{t,k}(n)}$ is surjective).
\end{corollary}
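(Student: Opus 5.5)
This is a direct consequence of Theorem~\ref{thm:GW}. The plan is to read off the rank of $W_{t,k}(n)$ from its Smith-type normal form.

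First, since $P$ and $P'$ are unimodular integer matrices, they are invertible over $\mathbb Z$, hence over $\mathbb Q$ and over $\Kk$. Multiplying by invertible matrices does not change the rank, so
\[
\rank_{\Kk}W_{t,k}(n)=\rank_{\Kk}\bigl(P\,W_{t,k}(n)\,P'\bigr).
\]
Moreover, for an integer matrix the rank over $\Kk$ equals the rank over $\mathbb Q$, because $\operatorname{char}\Kk=0$ and $\mathbb Q\subseteq\Kk$.

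Second, the rank of a diagonal matrix (padded with zero columns) is the number of its nonzero diagonal entries. The diagonal entries are $\binom{k-i}{t-i}$ for $0\le i\le t$. These are nonzero integers, hence nonzero in $\Kk$, because $0\le t-i\le k-i$ follows from $t\le k$.

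Third, I count the entries with multiplicity. The sum $\sum_{i=0}^{t}\bigl(\binom ni-\binom n{i-1}\bigr)$ telescopes to $\binom nt-\binom n{-1}=\binom nt$. This is exactly the number of rows of $W_{t,k}(n)$, which are indexed by $\binom{[n]}{t}$. So the rank equals the number of rows: the matrix has full row rank, its rows are linearly independent, and $F_{W_{t,k}(n)}$ is surjective onto $\Kk^{\binom nt}$.

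The only point needing care is that the hypothesis $t\le k\le n-t$ is exactly the one Theorem~\ref{thm:GW} requires. The boundary case $t=0$ is trivial: $W_{0,k}(n)$ is a single row of ones, which is nonzero. There is no real obstacle.
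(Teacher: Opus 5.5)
Your proposal is correct and follows the paper's own argument: it reads the rank off Wilson's diagonal form, notes that each $\binom{k-i}{t-i}$ is nonzero for $0\le i\le t\le k$, and uses the fact that the multiplicities telescope to $\binom nt$, which is the number of rows. Your added remarks, that unimodular matrices are invertible over $\Kk$ and that nonzero integers stay nonzero in characteristic $0$, just make explicit what the paper leaves implicit.
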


Since $I\cap J=\emptyset$ is equivalent to $I\subseteq[n]\setminus J$, letting $P$ be the permutation matrix representing the bijection $\binom{[n]}{k}\to\binom{[n]}{n-k}$, $J\mapsto[n]\setminus J$, we have
\[
  D_{t,k}(n)=W_{t,n-k}(n)\,P.
\]

\begin{corollary}\label{cor:rankD}
For every non-negative integer $t\le\min(k,n-k)$ we have $\rank_{\Kk}D_{t,k}(n)=\binom nt$.
In particular, if $2k\le n+1$, then $D_{k-1,k}(n)$ has full row rank and $F_{D_{k-1,k}(n)}$ is surjective.
\end{corollary}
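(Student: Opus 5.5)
The plan is to reduce Corollary~\ref{cor:rankD} to Corollary~\ref{cor:fullrank}, using the factorization $D_{t,k}(n)=W_{t,n-k}(n)\,P$ established just above the statement. Since $P$ is a permutation matrix, it is invertible. Right multiplication by an invertible matrix does not change the rank. Hence
\[
  \rank_{\Kk}D_{t,k}(n)=\rank_{\Kk}W_{t,n-k}(n).
\]

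The first step is to check that the pair $(t,\,n-k)$ satisfies the hypothesis $t\le n-k\le n-t$ of Corollary~\ref{cor:fullrank}. The inequality $t\le n-k$ is exactly $t\le\min(k,n-k)$ on its $n-k$ part. The inequality $n-k\le n-t$ is equivalent to $t\le k$, which is the other part of the assumption. Corollary~\ref{cor:fullrank} then gives $\rank_{\Kk}W_{t,n-k}(n)=\binom nt$. Since $D_{t,k}(n)$ has exactly $\binom nt$ rows, it has full row rank, its rows are linearly independent, and $F_{D_{t,k}(n)}$ is surjective.

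For the particular case, put $t=k-1$. We need $k-1\le\min(k,n-k)$. The inequality $k-1\le k$ is trivial, and $k-1\le n-k$ is equivalent to $2k\le n+1$. We also need $t\ge0$, that is $k\ge1$. When $k=0$ the matrix $D_{-1,0}(n)$ is empty, so the claim is vacuous or conventional and can be noted separately.

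There is no real obstacle. The only care needed is the index bookkeeping when passing from $k$ to $n-k$, and confirming that both inequalities of Wilson's hypothesis hold.
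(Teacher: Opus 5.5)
Your proposal is correct and follows the paper's own proof: you use $D_{t,k}(n)=W_{t,n-k}(n)P$ with $P$ invertible, check that $t\le\min(k,n-k)$ is equivalent to $t\le n-k\le n-t$ so that Corollary~\ref{cor:fullrank} applies, and then specialize to $t=k-1$, where the condition becomes $2k\le n+1$. Your remark on the degenerate case $k=0$ (where $t=-1$) is a small point of care that the paper leaves implicit.
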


\begin{proof}
The condition $t\le\min(k,n-k)$ is equivalent to $t\le n-k\le n-t$, so it suffices to apply Corollary~\ref{cor:fullrank} to $W_{t,n-k}(n)$.
For $t=k-1$ the condition reads $k-1\le n-k$, that is, $2k\le n+1$.
\end{proof}

Next we describe the decomposition with respect to the action of the symmetric group $\mathfrak S_n$.
The group $\mathfrak S_n$ acts on $[n]$ by permutations, and hence on $\binom{[n]}{k}$ by $\sigma\cdot I=\{\sigma(i):i\in I\}$.
Let $(e_I)_{I\in\binom{[n]}{k}}$ be the standard basis of $\mathbb Q^{\binom{[n]}{k}}$; the formula $\sigma e_I=e_{\sigma\cdot I}$ defines a representation of $\mathfrak S_n$ (the permutation representation).
The $(I,J)$ entry $\ind{I\cap J=\emptyset}$ of $D_{t,k}(n)$ does not change when $(I,J)$ is replaced by $(\sigma\cdot I,\sigma\cdot J)$, so the linear map $F_{D_{t,k}(n)}\colon\mathbb Q^{\binom{[n]}{k}}\to\mathbb Q^{\binom{[n]}{t}}$ is $\mathfrak S_n$-equivariant.
The same holds for the inclusion matrix $W_{t,k}(n)$.
The irreducible decomposition of the permutation representation $\mathbb Q^{\binom{[n]}{k}}$ is given as follows by the theory of the Johnson scheme.

\begin{proposition}\label{prop:johnson}
Let $0\le k\le n$.
\begin{enumerate}
\item[(a)] $\mathbb Q^{\binom{[n]}{k}}=\bigoplus_{i=0}^{\min(k,n-k)}V_i^{(k)}$ is a multiplicity-free decomposition into mutually orthogonal irreducible representations, with
$V_i^{(k)}\cong V_i^{(i)}$ and $\dim V_i^{(k)}=\binom ni-\binom n{i-1}$, where $\binom n{-1}=0$.
\item[(b)] $F_{D_k(n)}$ acts on $V_i^{(k)}$ as multiplication by the scalar $(-1)^i\binom{n-k-i}{k-i}$.
\item[(c)] $F_{D_{t,k}(n)}$ maps $V_i^{(k)}$ to $V_i^{(t)}$ and maps all other components to $0$,
where $V_i^{(t)}$ is understood to be $0$ if $i>\min(t,n-t)$.
\end{enumerate}
\end{proposition}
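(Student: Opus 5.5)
One route is to build the decomposition explicitly from the inclusion maps and read off (a)–(c) from Wilson's rank result (Corollary~\ref{cor:fullrank}) and equivariance. For $0\le i\le \min(k,n-k)$, let $W_i^{(k)}\subseteq\mathbb Q^{\binom{[n]}{k}}$ be the image of the transpose $F_{W_{i,k}(n)^{T}}\colon\mathbb Q^{\binom{[n]}{i}}\to\mathbb Q^{\binom{[n]}{k}}$, which sends $e_I$ to $\sum_{J\supseteq I}e_J$. By Corollary~\ref{cor:fullrank} this map is injective, so $\dim W_i^{(k)}=\binom ni$. The inclusion $W_{i-1,k}=\frac{1}{k-i+1}W_{i-1,i}W_{i,k}$ gives a filtration $W_0^{(k)}\subseteq W_1^{(k)}\subseteq\cdots$. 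Define $V_i^{(k)}=W_i^{(k)}\cap (W_{i-1}^{(k)})^{\perp}$ with respect to the standard inner product. These pieces are $\mathfrak S_n$-stable because the maps are equivariant and the inner product is invariant. They are mutually orthogonal, sum to the whole space (since $W_{\min(k,n-k)}^{(k)}$ is everything by a dimension count), and have dimension $\binom ni-\binom n{i-1}$.

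To show $V_i^{(k)}\cong V_i^{(i)}$, I would observe that $F_{W_{i,k}^{T}}$ is injective and equivariant. Being injective, it carries $W_i^{(i)}=\mathbb Q^{\binom{[n]}{i}}$ isomorphically onto $W_i^{(k)}$ and $W_{i-1}^{(i)}$ onto $W_{i-1}^{(k)}$. Hence $V_i^{(k)}\cong W_i^{(k)}/W_{i-1}^{(k)}\cong W_i^{(i)}/W_{i-1}^{(i)}\cong V_i^{(i)}$ as representations. Irreducibility and multiplicity-freeness come from the Johnson scheme being a commutative association scheme with exactly $\min(k,n-k)+1$ classes. Equivalently, the permutation character on $k$-subsets, $\mathrm{Ind}_{\mathfrak S_k\times\mathfrak S_{n-k}}^{\mathfrak S_n}1$, has $\langle\chi,\chi\rangle$ equal to the number of $\mathfrak S_n$-orbits on pairs of $k$-subsets, namely $\min(k,n-k)+1$. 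Since the space already splits into that many nonzero pieces, each piece must be irreducible and they must be pairwise non-isomorphic. Part (a) follows.

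For (c), Schur's lemma applies: $F_{D_{t,k}(n)}$ is equivariant, so it maps the isotypic component of type $V_i$ into the isotypic component of the same type in $\mathbb Q^{\binom{[n]}{t}}$. By (a) for $t$, that component is $V_i^{(t)}$ when $i\le\min(t,n-t)$ and $0$ otherwise. For (b), $V_i^{(k)}$ occurs with multiplicity one, so $D_k(n)$ acts on it by some scalar $\lambda_i$. To compute $\lambda_i$, I would pick a test vector in $V_i^{(k)}$, namely $F_{W_{i,k}^{T}}$ applied to a vector of $V_i^{(i)}$. A convenient choice is the product $x=\prod_{r=1}^{i}(e_{a_r}-e_{b_r})$ in the exterior-like sense: $x=\sum_{\epsilon}\pm e_{\{c_1,\dots,c_i\}}$ with $c_r\in\{a_r,b_r\}$ and sign $(-1)^{\#b}$. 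This vector is killed by $W_{i-1,i}$, so it lies in $V_i^{(i)}$. I would then evaluate the coefficient of one particular $k$-set in $D_kW_{i,k}^Tx$. Taking $J\supseteq\{a_1,\dots,a_i\}$ with $J$ avoiding all the $b_r$, a $k$-set $K$ disjoint from $J$ must contain some $\{c_r\}$ with every $c_r=b_r$. The count is then $\binom{n-k-i}{k-i}$ with sign $(-1)^i$, compared with coefficient $1$ at $J$ itself. This gives $\lambda_i=(-1)^i\binom{n-k-i}{k-i}$.

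The main obstacle is this eigenvalue computation: the bookkeeping of which $c$-choices survive. Alternatively, one can cite the known Kneser-graph eigenvalues (e.g.\ from Godsil--Meagher \cite{MR3497070}), and I would fall back on that citation if the direct count becomes messy.
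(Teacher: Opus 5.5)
Your proof is correct and follows the paper's skeleton: the same filtration by your $W_i^{(k)}$ (the paper's $U_i^{(k)}$) with orthogonal graded pieces, the same orbit count on pairs of $k$-sets to force irreducibility and multiplicity-freeness (the paper phrases it via the commutative commutant $E$, you via $\langle\chi,\chi\rangle$), and the same Schur argument for (c). You differ in two local places.

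\emph{The isomorphism $V_i^{(k)}\cong V_i^{(i)}$.} You note that $F_{W_{i,k}(n)^{\mathsf T}}$ is injective and, by $W_{i-1,i}W_{i,k}=(k-i+1)W_{i-1,k}$, carries the step $W_{i-1}^{(i)}\subseteq W_i^{(i)}$ onto $W_{i-1}^{(k)}\subseteq W_i^{(k)}$, so the graded quotients are isomorphic. This is cleaner than the paper's binomial-basis orthogonality computation and needs no irreducibility. However, the paper's computation also yields the concrete inclusion $F_{W_{i,k}(n)^{\mathsf T}}(V_i^{(i)})\subseteq V_i^{(k)}$. You use this silently when you take $W_{i,k}^{\mathsf T}x$ as a test vector in $V_i^{(k)}$. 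In your setup, add the one-line justification: this image is an irreducible copy of $V_i^{(i)}\cong V_i^{(k)}$, hence equals $V_i^{(k)}$ by multiplicity-freeness.

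\emph{The eigenvalue in (b).} The paper proves $D_k(n)W_{i,k}(n)^{\mathsf T}x=(-1)^i\binom{n-k-i}{k-i}W_{i,k}(n)^{\mathsf T}x$ for every $x\in V_i^{(i)}$ by inclusion--exclusion, so it never needs to know in advance that $D_k(n)$ acts by a scalar. You instead invoke the scalar form of Schur's lemma and evaluate one coordinate of one polytabloid-type vector. Over $\mathbb Q$ that step requires $\operatorname{End}_{\mathfrak S_n}(V_i^{(k)})=\mathbb Q$. Your complex character count does give this, since it forces each $V_i^{(k)}\otimes\mathbb C$ to be irreducible, but you should say so. The coordinate computation itself is complete as written. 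Only $A=\{b_1,\dots,b_i\}$ survives, giving $(-1)^i\binom{n-k-i}{k-i}$ against $y_J=1$, so there is no messy bookkeeping and no need to fall back on citing the Kneser-graph eigenvalues.
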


\begin{proof}
  The following proof depends on nothing other than Corollary~\ref{cor:fullrank} (a corollary of Wilson's theorem).
  The statement follows from Young's rule \cite{Sagan}, but we give a direct proof for the reader's convenience.
Put $m=\min(k,n-k)$.

\emph{Step 0: Construction of the decomposition.}
For $0\le i\le m$ put $U_i^{(k)}=\operatorname{Im}F_{W_{i,k}(n)^{\mathsf T}}\subseteq\mathbb Q^{\binom{[n]}{k}}$.
Since $F_{W_{i,k}(n)^{\mathsf T}}$ is $\mathfrak S_n$-equivariant, $U_i^{(k)}$ is $\mathfrak S_n$-invariant.
The condition $i\le m$ is equivalent to $i\le k\le n-i$, so by Corollary~\ref{cor:fullrank} the matrix $W_{i,k}(n)$ has full row rank,
$F_{W_{i,k}(n)^{\mathsf T}}$ is injective, and $\dim U_i^{(k)}=\binom ni$.
For $1\le i\le k$ we have $W_{i-1,i}(n)W_{i,k}(n)=(k-i+1)W_{i-1,k}(n)$ (the $(A,J)$ entry of the left-hand side is the number $k-i+1$ of $i$-element sets $B$ with $A\subseteq B\subseteq J$ if $A\subseteq J$, and $0$ otherwise),
so, taking transposes, we obtain $U_{i-1}^{(k)}\subseteq U_i^{(k)}$.
Since $\dim U_m^{(k)}=\binom nm=\binom nk$, we have $U_m^{(k)}=\mathbb Q^{\binom{[n]}{k}}$.
Put $U_{-1}^{(k)}=0$ and, using orthogonal complements with respect to the standard inner product, define
\[
  V_i^{(k)}=U_i^{(k)}\cap\bigl(U_{i-1}^{(k)}\bigr)^{\perp}\qquad(0\le i\le m).
\]
Since the permutation representation preserves the standard inner product, $V_i^{(k)}$ is $\mathfrak S_n$-invariant,
$V_0^{(k)},\dots,V_m^{(k)}$ are mutually orthogonal, and
$\bigoplus_{i=0}^mV_i^{(k)}=\mathbb Q^{\binom{[n]}{k}}$, $\dim V_i^{(k)}=\binom ni-\binom n{i-1}$.
For $0\le i\le m\le n/2$ we have $\binom ni>\binom n{i-1}$ ($i\ge1$), so $V_i^{(k)}\ne0$.

\emph{Step 1: Each $V_i^{(k)}$ is irreducible, and they are pairwise non-isomorphic.}
The algebra $E=\operatorname{End}_{\mathfrak S_n}\bigl(\mathbb Q^{\binom{[n]}{k}}\bigr)$ consists of all matrices commuting with every permutation matrix.
A matrix $A$ commutes with all permutation matrices if and only if its entry $A_{I,J}$ is constant on the orbits of the action of $\mathfrak S_n$ on $\binom{[n]}{k}\times\binom{[n]}{k}$.
The orbit of $(I,J)$ is determined by $|I\cap J|$, and $|I\cap J|$ takes every value from $\max(0,2k-n)$ to $k$.
Hence $E$ has basis $A_j=\bigl(\ind{|I\cap J|=j}\bigr)_{I,J}$ ($\max(0,2k-n)\le j\le k$), and $\dim_{\mathbb Q}E=k-\max(0,2k-n)+1=m+1$.
Each $A_j$ is a symmetric matrix, so every element of $E$ is symmetric.
If $A,B\in E$, then $AB\in E$ is also symmetric, so $AB=(AB)^{\mathsf T}=B^{\mathsf T}A^{\mathsf T}=BA$, and $E$ is a commutative ring.
On the other hand, if the irreducible decomposition of $\mathbb Q^{\binom{[n]}{k}}$ is $\bigoplus_jN_j^{\,m_j}$ (the $N_j$ being pairwise non-isomorphic irreducible representations), then
$E\cong\prod_jM_{m_j}(D_j)$, where $D_j=\operatorname{End}_{\mathfrak S_n}(N_j)$ is a division algebra over $\mathbb Q$.
Since $E$ is commutative, all $m_j=1$.
Let $r=\sum_jm_j$ be the number of irreducible components.
Since $V_0^{(k)},\dots,V_m^{(k)}$ are all nonzero and give a direct sum of mutually orthogonal invariant subspaces, $r\ge m+1$, and
$m+1=\dim_{\mathbb Q}E=\sum_j\dim_{\mathbb Q}D_j\ge r\ge m+1$.
Hence $r=m+1$, and each $V_i^{(k)}$ consists of exactly one irreducible component, i.e., it is an irreducible representation.
Since the $N_j$ are pairwise non-isomorphic, the decomposition is multiplicity-free.
A multiplicity-free decomposition is uniquely determined as the decomposition into isotypic components.

\emph{Step 2: $V_i^{(k)}\cong V_i^{(i)}$.}
Let $0\le i\le m$. By Step 0 for $k=i$, $V_i^{(i)}=\bigl(\operatorname{Im}F_{W_{i-1,i}(n)^{\mathsf T}}\bigr)^{\perp}=\ker F_{W_{i-1,i}(n)}$ (for $i=0$, $V_0^{(0)}=\mathbb Q$).
Let $x\in V_i^{(i)}$. For $0\le l<i$ we have $W_{l,i-1}(n)W_{i-1,i}(n)=(i-l)W_{l,i}(n)$, hence $F_{W_{l,i}(n)}(x)=0$, that is,
\begin{equation}\label{eq:xker}
  \sum_{A\supseteq L}x_A=0\qquad(L\in\tbinom{[n]}{l},\ 0\le l<i),
\end{equation}
where the sum runs over $i$-element sets $A$.
We show that $F_{W_{i,k}(n)^{\mathsf T}}(x)\in U_i^{(k)}$ is orthogonal to $U_{i-1}^{(k)}$.
For $y\in\mathbb Q^{\binom{[n]}{i-1}}$, put $N(u)=\binom{n-u}{k-u}$ for $u\le k$ and $N(u)=0$ for $u>k$. Since the number of $k$-element sets $J$ containing $A\cup B$ is $N(|A\cup B|)$, we have
\[
  \bigl\langle W_{i,k}(n)^{\mathsf T}x,\,W_{i-1,k}(n)^{\mathsf T}y\bigr\rangle
  =\sum_{A,B}x_Ay_BN(|A\cup B|)
  =\sum_By_B\sum_Ax_A\,f(|A\cap B|),
\]
where $|B|=i-1$ and $f(u)=N(2i-1-u)$.
Since $f$ is a function on $\{0,\dots,i-1\}$, it can be written in the binomial basis as $f(u)=\sum_{l=0}^{i-1}c_l\binom ul$.
By \eqref{eq:xker}, $\sum_Ax_A\binom{|A\cap B|}{l}=\sum_{L\subseteq B,\,|L|=l}\sum_{A\supseteq L}x_A=0$ for $l\le i-1$.
Hence the inner product is $0$, and $F_{W_{i,k}(n)^{\mathsf T}}(V_i^{(i)})\subseteq V_i^{(k)}$.
The map $F_{W_{i,k}(n)^{\mathsf T}}$ is injective and $\mathfrak S_n$-equivariant, and $V_i^{(i)}\ne0$.
Since $V_i^{(k)}$ is irreducible (Step 1), the image is all of $V_i^{(k)}$, and we obtain $V_i^{(k)}\cong V_i^{(i)}$.
This proves (a).

\emph{Step 3: (b).}
Let $x\in V_i^{(i)}$. For a $k$-element set $J$ and an $i$-element set $A$, the number of $k$-element sets $J'$ with $J'\cap J=\emptyset$ and $J'\supseteq A$ is $\binom{n-k-i}{k-i}$ if $A\cap J=\emptyset$, and $0$ otherwise.
Hence $D_k(n)W_{i,k}(n)^{\mathsf T}=\binom{n-k-i}{k-i}D_{i,k}(n)^{\mathsf T}$.
Moreover, by inclusion--exclusion, $\ind{A\cap J=\emptyset}=\sum_{L\subseteq A\cap J}(-1)^{|L|}$, so using \eqref{eq:xker},
\begin{align*}
  \bigl(D_{i,k}(n)^{\mathsf T}x\bigr)_J&=\sum_Ax_A\ind{A\cap J=\emptyset}
  =\sum_{L\subseteq J}(-1)^{|L|}\sum_{A\supseteq L}x_A\\
  &=(-1)^i\sum_{L\subseteq J,\,|L|=i}x_L
  =(-1)^i\bigl(W_{i,k}(n)^{\mathsf T}x\bigr)_J
\end{align*}
(here the terms with $|L|<i$ vanish by \eqref{eq:xker}, and for $|L|>i$ there is no $i$-element set $A$ containing $L$).
Therefore $D_k(n)\,W_{i,k}(n)^{\mathsf T}x=(-1)^i\binom{n-k-i}{k-i}W_{i,k}(n)^{\mathsf T}x$.
By Step 2, $W_{i,k}(n)^{\mathsf T}$ maps $V_i^{(i)}$ onto $V_i^{(k)}$, so (b) follows.

\emph{Step 4: (c).}
Since $F_{D_{t,k}(n)}$ is $\mathfrak S_n$-equivariant, by Schur's lemma the image of $V_i^{(k)}$ is contained in the sum of the irreducible subrepresentations of $\mathbb Q^{\binom{[n]}{t}}$ isomorphic to $V_i^{(k)}$.
For $0\le i<j\le n/2$, the representations $V_i^{(i)}$ and $V_j^{(j)}$ are not isomorphic.
Indeed, by Step 2, $V_i^{(i)}\cong V_i^{(j)}$, and $V_i^{(j)}$ and $V_j^{(j)}$ are distinct components of the multiplicity-free decomposition (Step 1) of $\mathbb Q^{\binom{[n]}{j}}$.
Therefore, together with $V_i^{(k)}\cong V_i^{(i)}$, the only irreducible subrepresentation of $\mathbb Q^{\binom{[n]}{t}}$ isomorphic to $V_i^{(k)}$ is $V_i^{(t)}$ (and there is none if $i>\min(t,n-t)$).
Hence $F_{D_{t,k}(n)}$ maps $V_i^{(k)}$ to $V_i^{(t)}$.
\end{proof}

\section{Proof of the Main Theorem}\label{sec:proof}

Hereafter let $n\ge1$ and fix $0\le k\le(n+1)/2$.
By Lemma~\ref{lem:MW}, it suffices to show that $Q_k$ is non-degenerate for some $p$.

\subsection{A change of coordinates and a basis of $\Aa_k$}\label{subsec:basis}

Consider the invertible linear change of coordinates from $(a_1,\dots,a_n,b_1,\dots,b_n,c)$ to $(s_1,\dots,s_n,t_1,\dots,t_n,z)$ given by
\[
  \begin{aligned}
  &s_j=a_j+b_j,\qquad t_j=a_j-b_j\quad(1\le j\le n),\\
  &z=c+\frac14\sum_{j=1}^ns_j=\frac14\sum_{j=1}^na_j+\frac14\sum_{j=1}^nb_j+c.
  \end{aligned}
\]
For $I\subseteq[n]$ write $s_I=\prod_{i\in I}s_i$.
Since $a_jb_j=\frac14(s_j^2-t_j^2)$, the basis generating polynomial of $K_{1,1,n}$ in the new coordinates is
\begin{equation*}
  B_n=zs_{[n]}-\frac14\sum_{j=1}^nt_j^{2}s_{[n]\setminus\{j\}}.
\end{equation*}
Since $\Aa$ is determined by $B_n$ independently of the choice of linear coordinates, we may work in the new coordinates (Lemma~\ref{lem:MW} holds in any linear coordinate system).
We write the partial differential operators with respect to the new coordinates as
\[
  S_i=\partial_{s_i},\qquad T_i=\partial_{t_i},\qquad Z=\partial_z.
\]
For $I\subseteq[n]$ we write $S^I=\prod_{i\in I}S_i$ and $\overline{I}=[n]\setminus I$.

For $k\le n$, the results of applying monomials of degree $k$ in the operators to $B_n$ are as follows.

\begin{center}
\begin{tabular}{lll}
Operator $u$ & Condition & $uB_n$ \\ \hline
$ZS^I$ & $I\in\binom{[n]}{k-1}$ & $s_{\overline{I}}$ \\
$T_{i'}S^{I'}$ & $I'\in\binom{[n]\setminus\{i'\}}{k-1}$ & $-\frac12\,t_{i'}\,s_{\overline{I'}\setminus\{i'\}}$ \\
$S^{I''}$ & $I''\in\binom{[n]}{k}$ & $zs_{\overline{I''}}-\frac14\sum_{j\in\overline{I''}}t_j^2s_{\overline{I''}\setminus\{j\}}$ \\
$T_j^2S^I$ & $|I|=k-2,\ j\in\overline{I}$ & $-\frac12\,s_{\overline{I}\setminus\{j\}}=-\frac12\,(ZS^{I\cup\{j\}})B_n$ \\
\end{tabular}
\end{center}

All other operators, for example $Z^2$, $ZT_j$, $T_jT_{j'}$ ($j\ne j'$, $j,j'\in [n]$), $S_i^2$, and $T_{i'}S^{I'}$ with $i'\in I'$, annihilate $B_n$.

\begin{definition}\label{def:B}
Let $\mathcal{B}_k$ be the sequence obtained by concatenating the following three sequences (the indices are arranged in some fixed order):
\begin{gather*}
  \mathcal{E}_k(n)=(ZS^I)_{I\in\binom{[n]}{k-1}},\qquad
  \mathcal{G}_k(n)=(S^{I''})_{I''\in\binom{[n]}{k}},\\
  \mathcal{F}_k(n)=(T_{i'}S^{I'})_{i'\in[n],\ I'\in\binom{[n]\setminus\{i'\}}{k-1}}.
\end{gather*}
\end{definition}

\begin{lemma}\label{lem:basis}
The sequence $\mathcal{B}_k$ is a basis of $\Aa_k$.
\end{lemma}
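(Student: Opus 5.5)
We have to show two things: that the images of the operators in $\mathcal{B}_k$ span $\Rr_kB_n\cong\Aa_k$, and that these images are linearly independent.

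\emph{Spanning.} Every element of $\Rr_k$ is a linear combination of monomials in $Z,S_i,T_j$. By the table and the remarks after it, such a monomial either annihilates $B_n$ or is one of the following:
\begin{itemize}
\item[(i)] $ZS^I$ with $|I|=k-1$;
\item[(ii)] $T_{i'}S^{I'}$ with $i'\notin I'$;
\item[(iii)] $S^{I''}$;
\item[(iv)] $T_j^2S^I$.
\end{itemize}
All other monomials vanish on $B_n$. This covers $Z^2$, $ZT_j$, $T_jT_{j'}$ with $j\ne j'$, $T_j^3$, $T_j^2Z$, any $S_i^2$, and $T_jS_j$. The reason is that $B_n$ is linear in $z$ and in each $t_j^2s_{\dots}$ term, and multilinear in the $s_i$; moreover $t_j$ occurs only in the term without $s_j$. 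For type (iv), the table gives $T_j^2S^IB_n=-\tfrac12(ZS^{I\cup\{j\}})B_n$, so it lies in the span of $\mathcal{E}_k(n)$. Hence $\Rr_kB_n$ is spanned by $\mathcal{E}_k(n)B_n$, $\mathcal{F}_k(n)B_n$ and $\mathcal{G}_k(n)B_n$.

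\emph{Linear independence.} Suppose some linear combination of the three families applied to $B_n$ vanishes. Sort the resulting polynomials by their monomial types:
\begin{itemize}
\item $ZS^IB_n=s_{\overline I}$ are distinct squarefree $s$-monomials of degree $n-k+1$, involving neither $z$ nor $t$.
\item $T_{i'}S^{I'}B_n=-\tfrac12 t_{i'}s_{\overline{I'}\setminus\{i'\}}$ are distinct monomials, each containing exactly one $t$ to the first power. The pair $(i',I')$ is recovered from the monomial.
\item $S^{I''}B_n$ contains the monomial $zs_{\overline{I''}}$, which involves $z$. Its remaining terms all involve $t_j^2$.
\end{itemize}
First compare the coefficients of $zs_{\overline{I''}}$. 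Only $S^{I''}$ produces this monomial, so every coefficient in front of $\mathcal{G}_k(n)$ is zero. The remaining polynomials are distinct monomials up to nonzero scalars, and the $t$-degree separates $\mathcal{E}_k(n)$ from $\mathcal{F}_k(n)$. Therefore all coefficients vanish.

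Since $\Aa_k\cong\Rr_kB_n$ via $\alpha\mapsto\alpha B_n$, the sequence $\mathcal{B}_k$ is a basis of $\Aa_k$.

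\emph{Index ranges.} The only step needing care is the boundary cases of the index ranges. When $k=0$, the families $\mathcal{E}_k(n)$ and $\mathcal{F}_k(n)$ are empty and $\mathcal{G}_0(n)=(1)$. When $k-1>n-1$, some families are empty. These cases cause no problem for $k\le(n+1)/2$, and in particular $k\le n$ holds whenever the table is used. The other point is to check that the list of annihilating monomials is exhaustive. I would do this by noting that each monomial of $B_n$ has $z$-degree plus total $t$-degree at most $2$, with the $t$-part either $1$ or $t_j^2$, and that all $s$-exponents are at most $1$.
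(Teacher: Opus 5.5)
Your proposal is correct and follows the paper's proof step for step. Spanning comes from the table, with $T_j^2S^IB_n$ absorbed into the span of $\mathcal{E}_k(n)$; independence comes from first using the monomials $zs_{\overline{I''}}$ to kill the $\mathcal{G}_k(n)$ coefficients, then using that the monomials $s_{\overline{I}}$ and $t_{i'}s_{\overline{I'}\setminus\{i'\}}$ are pairwise distinct. Your check that the list of non-annihilating monomials is exhaustive (every monomial of $B_n$ is $zs_{[n]}$ or $t_j^2s_{[n]\setminus\{j\}}$) is a useful detail that the paper leaves implicit in its table.
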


\begin{proof}
We have $\Aa_k\cong\Rr_kB_n$, and monomials other than those in the table above annihilate $B_n$.
The result of applying $T_j^2S^I$ to $B_n$ is a constant multiple of that of $ZS^{I\cup\{j\}}$, so $\Rr_kB_n$ is spanned by the images of $\mathcal{B}_k$.
Moreover, the first three kinds of operators in the table are linearly independent.
Indeed, suppose that a linear combination of the three kinds of results vanishes.
Only the results for $S^{I''}$ contain the variable $z$, and the monomials $zs_{\overline{I''}}$ ($I''\in\binom{[n]}{k}$) are pairwise distinct, so the coefficients of $\mathcal{G}_k(n)$ are $0$.
The remaining results are the monomials $s_{\overline{I}}$ and $t_{i'}s_{\overline{I'}\setminus\{i'\}}$; they are pairwise distinct (the pair $(i',I')$ is recovered from the monomial $t_{i'}s_{\overline{I'}\setminus\{i'\}}$), so all the remaining coefficients are $0$ as well.
\end{proof}

\subsection{Evaluation of the $k$-th Hessian at the point $p$}\label{subsec:gram}
\begin{definition}
  Let $p$ be the point given in the new coordinates by
  \[
    (s_1,\dots,s_n,t_1,\dots,t_n,z)=(1,\dots,1,0,\dots,0,1).
  \]
\end{definition}
For the corresponding $\ell=Z+\sum_iS_i$, the form $Q_k(u,v)=(uvB_n)(p)$, $(u,v\in\mathcal{B}_k)$, of Lemma~\ref{lem:MW} is given by the following table.

\begin{center}
\renewcommand{\arraystretch}{1.3}
\begin{tabular}{c|ccc}
 & $\mathcal{E}_k(n):\ v=ZS^J$ & $\mathcal{F}_k(n):\ v=T_{j'}S^{J'}$ & $\mathcal{G}_k(n):\ v=S^{J''}$ \\[0.5\baselineskip]
\hline
$\mathcal{E}_k(n):\ u=ZS^{I}$ & $0$ & $0$ & $\ind{I\cap J''=\emptyset}$ \\
$\mathcal{F}_k(n):\ u=T_{i'}S^{I'}$ & $0$ & $-\frac12\,\ind{i'=j'\ \mbox{and}\ I'\cap J'=\emptyset}$ & $0$ \\
$\mathcal{G}_k(n):\ u=S^{I''}$ & $\ind{I''\cap J=\emptyset}$ & $0$ & $\ind{I''\cap J''=\emptyset}$ \\
\end{tabular}
\end{center}
Here $I,J\in\binom{[n]}{k-1}$, $i',j'\in[n]$, $I'\in\binom{[n]\setminus\{i'\}}{k-1}$, $J'\in\binom{[n]\setminus\{j'\}}{k-1}$, and $I'',J''\in\binom{[n]}{k}$. 
The entries involving $Z^2$ or $ZT_j$ vanish because these operators annihilate $B_n$.
The entry for $(\mathcal{F}_k(n), \ \mathcal{G}_k(n))$  is $0$ because the polynomial $T_{i'}S^{I'}S^{J''}B_n$ is a multiple of $t_{i'}$, and $t_{i'}=0$ at the point $p$.
Consequently we obtain the following.
\begin{proposition}
\[
 H_{k,\mathcal{B}_k}(B_n)(p)=\begin{pmatrix}O&O&M_k(n)\\O&C_k(n)&O\\M_k(n)^{\mathsf T}&O&D_k(n)\end{pmatrix},
\]
where
\[
  M_k(n)=D_{k-1,k}(n),\qquad
  C_k(n)=-\frac12\bigoplus_{j=1}^nD_{k-1}\bigl([n]\setminus\{j\}\bigr),
\]
and $D_{k-1}([n]\setminus\{j\})$ is obtained by applying Definition~\ref{def:disj} to the $(n-1)$-element set $[n]\setminus\{j\}$.
\end{proposition}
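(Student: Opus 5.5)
The plan is to verify the table of values $Q_k(u,v)=(uvB_n)(p)$ entry by entry, and then to read off the blocks as disjointness matrices. Two facts about $B_n=zs_{[n]}-\frac14\sum_j t_j^2 s_{[n]\setminus\{j\}}$ do all the work.
\begin{enumerate}
\item[(A)] In every monomial of $B_n$ each $s_i$ occurs to degree at most $1$, $z$ to degree at most $1$, and the $t$'s occur only as a single $t_j^2$ that is never multiplied by $z$. Consequently $S_i^2$, $Z^2$, $ZT_j$ and $T_jT_{j'}$ with $j\ne j'$ all annihilate $B_n$, and so does $T_jS_j$.
\item[(B)] At $p$ we have $s_i=z=1$ and $t_j=0$.
\end{enumerate}
Because of (A), a product $S^IS^J$ acting on $B_n$ vanishes unless $I\cap J=\emptyset$, in which case it equals $S^{I\cup J}$. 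So every entry reduces to applying a single operator from the table in \S\ref{subsec:basis}, now of degree $2k$, to $B_n$, and then evaluating at $p$. I will use the degree-$2k$ analogue of that table, which holds verbatim since $2k\le n+1$. If the relevant union of index sets would have more than $n$ elements, then the disjointness indicator is already $0$ and the derivative also vanishes, so no separate case is needed.

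Now the entries, block by block.
\begin{itemize}
\item \emph{$\mathcal E\times\mathcal E$ and $\mathcal E\times\mathcal F$.} Here $uv$ contains $Z^2$ or $ZT_{j'}$, so these blocks are $0$ by (A).
\item \emph{$\mathcal E\times\mathcal G$.} We have $ZS^IS^{J''}B_n=0$ unless $I\cap J''=\emptyset$. If they are disjoint, it equals $s_{\overline{I\cup J''}}$, which takes the value $1$ at $p$. Hence the entry is $\ind{I\cap J''=\emptyset}$.
\item \emph{$\mathcal F\times\mathcal G$.} The polynomial $T_{i'}S^{I'}S^{J''}B_n$ is either $0$ or $-\frac12 t_{i'}$ times a monomial in the $s$'s, so it vanishes at $p$.
\item \emph{$\mathcal F\times\mathcal F$.} If $i'\ne j'$, then $T_{i'}T_{j'}$ kills $B_n$. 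If $i'=j'$, we use $T_{i'}^2S^KB_n=-\frac12 s_{\overline K\setminus\{i'\}}$ for $i'\notin K$, where $K=I'\cup J'$. Since $i'\notin I'\cup J'$ automatically, the entry is $-\frac12\ind{I'\cap J'=\emptyset}$.
\item \emph{$\mathcal G\times\mathcal G$.} If $I''\cap J''=\emptyset$, then $S^{I''\cup J''}B_n=zs_{\overline{K}}-\frac14\sum_{j\in\overline K}t_j^2s_{\overline K\setminus\{j\}}$ with $K=I''\cup J''$. At $p$ the $t_j^2$ terms vanish by (B), so the value is $1$. Hence the entry is $\ind{I''\cap J''=\emptyset}$.
\item The $\mathcal G\times\mathcal E$ and $\mathcal G\times\mathcal F$ blocks follow by symmetry, since the operators commute.
\end{itemize}

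Finally I identify the blocks with the matrices in the statement.
\begin{itemize}
\item The $\mathcal E\times\mathcal G$ block has rows indexed by $\binom{[n]}{k-1}$ and columns by $\binom{[n]}{k}$, with entries $\ind{I\cap J''=\emptyset}$. This is exactly $D_{k-1,k}(n)=M_k(n)$, and the $\mathcal G\times\mathcal E$ block is its transpose.
\item The $\mathcal G\times\mathcal G$ block is $D_k(n)$.
\item Order $\mathcal F_k(n)$ first by $i'$. The $\mathcal F\times\mathcal F$ block is then block diagonal over $j\in[n]$. Its $j$-th diagonal block has entries $-\frac12\ind{I'\cap J'=\emptyset}$ for $I',J'\in\binom{[n]\setminus\{j\}}{k-1}$, which is $-\frac12D_{k-1}([n]\setminus\{j\})$. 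This gives $C_k(n)$.
\end{itemize}

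The only step needing real care is fact (A): all annihilations depend on $B_n$ being squarefree in the $s_i$ and linear in $z$ after the change of coordinates. I would check this directly from the displayed formula for $B_n$ before doing anything else.
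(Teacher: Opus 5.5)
Your proposal is correct and follows essentially the same route as the paper, which obtains the block form by tabulating $(uvB_n)(p)$ from the derivative table for $B_n=zs_{[n]}-\frac14\sum_j t_j^2 s_{[n]\setminus\{j\}}$. As in the paper, the $Z^2$ and $ZT_j$ entries vanish by annihilation and the $\mathcal{F}\times\mathcal{G}$ entries vanish because they are multiples of $t_{i'}$. You simply supply more of the routine verification than the paper does, including the edge case $2k=n+1$, where two disjoint $k$-subsets of $[n]$ cannot exist.
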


\subsection{Non-degeneracy of the bilinear form $Q_k$}\label{subsec:nondeg}

\begin{proposition}\label{prop:nondeg}
If $1\le k\le(n+1)/2$, then the matrix $ H_{k,\mathcal{B}_k}(B_n)(p)$ is non-singular.
\end{proposition}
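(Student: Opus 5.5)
The matrix is block-structured: after permuting rows and columns it is the direct sum of the middle block $C_k(n)$ and the $2\times2$ block matrix
\[
  N=\begin{pmatrix}O&M_k(n)\\ M_k(n)^{\mathsf T}&D_k(n)\end{pmatrix}.
\]
So I will prove that $C_k(n)$ and $N$ are each non-singular.

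For $C_k(n)$, it suffices to show that each summand $D_{k-1}([n]\setminus\{j\})$ is non-singular. This is the Kneser adjacency matrix on an $(n-1)$-element set. By Proposition~\ref{prop:johnson}(b), applied with $n-1$ in place of $n$ and $k-1$ in place of $k$, it acts on $V_i^{(k-1)}$ by the scalar $(-1)^i\binom{n-k-i}{k-1-i}$, for $0\le i\le\min(k-1,n-k)$. The hypothesis $2k\le n+1$ gives $n-k-i\ge k-1-i\ge0$, so every eigenvalue is nonzero. (For $k=1$ the block is the $1\times1$ matrix $(1)$; for $k-1=n-k$ the eigenvalues are $\pm1$.)

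For $N$, I take a vector $(x,y)$ with $M_k(n)y=0$ and $x^{\mathsf T}\!\cdot$ terms satisfying $M_k(n)^{\mathsf T}x+D_k(n)y=0$. The aim is to show $x=y=0$.
\begin{itemize}
\item Decompose $y=\sum_iy_i$ with $y_i\in V_i^{(k)}$, as in Proposition~\ref{prop:johnson}(a). By (c), $M_k(n)=D_{k-1,k}(n)$ maps $V_i^{(k)}$ into $V_i^{(k-1)}$ and kills the others. By Corollary~\ref{cor:rankD} it is surjective onto $\mathbb Q^{\binom{[n]}{k-1}}$, so it restricts to an isomorphism $V_i^{(k)}\to V_i^{(k-1)}$ for every $i\le\min(k-1,n-k+1)$, by Schur's lemma and irreducibility.
\item Hence $M_k(n)y=0$ forces $y_i=0$ for $i\le k-1$, so $y\in V_k^{(k)}$. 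This component exists only when $k\le n-k$.
\item Now apply $y^{\mathsf T}$ to the second equation. This gives $y^{\mathsf T}M_k(n)^{\mathsf T}x+y^{\mathsf T}D_k(n)y=0$. The first term is $(M_k(n)y)^{\mathsf T}x=0$. By (b), the second term is $(-1)^k\binom{n-2k}{0}\|y\|^2=(-1)^k\|y\|^2$, so $y=0$.
\item Then $M_k(n)^{\mathsf T}x=0$, and injectivity of $M_k(n)^{\mathsf T}$ (full row rank, Corollary~\ref{cor:rankD}) gives $x=0$.
\end{itemize}
The case $k=0$ (excluded here) is trivial, and for $k=1$ the same argument applies, with $\mathcal{E}$ indexed by the empty set.

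The step I expect to be the main obstacle is the second one, identifying $\ker M_k(n)$ exactly as $V_k^{(k)}$. This needs the restriction $V_i^{(k)}\to V_i^{(k-1)}$ to be nonzero for each relevant $i$. I would get that by combining surjectivity from Corollary~\ref{cor:rankD} with Proposition~\ref{prop:johnson}(c). I would also check the edge case $2k=n+1$: there $\min(k,n-k)=k-1$, so no $V_k^{(k)}$ exists and $M_k(n)$ is square and invertible, which makes $N$ trivially non-singular.
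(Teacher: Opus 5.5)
Your proposal is correct and follows the paper's proof essentially step by step. You show $C_k(n)$ is non-singular via the Kneser eigenvalues, identify $\ker M_k(n)=V_k^{(k)}$ from surjectivity plus Proposition~\ref{prop:johnson}(c), and then eliminate $y$ and $x$. The only cosmetic difference is how you kill $y$. You pair the third block equation with $y$ and use $y^{\mathsf T}D_k(n)y=(-1)^k\|y\|^2$, whereas the paper observes that $M_k(n)^{\mathsf T}x\in(\ker M_k(n))^{\perp}$ and $D_k(n)y\in\ker M_k(n)$ are orthogonal vectors summing to zero.
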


\begin{proof}
All matrices in this proof have rational entries, and we regard them as matrices over $\mathbb Q$.

(1) \emph{$C_k(n)$ is non-singular, i.e., $F_{C_k(n)}$ is an isomorphism.}
The matrix $D_{k-1}([n]\setminus\{j\})$ is the adjacency matrix of the Kneser graph $K(n-1,k-1)$, and by Proposition~\ref{prop:johnson}(b) (applied with $n-1$ and $k-1$ in place of $n$ and $k$) its eigenvalues are $(-1)^i\binom{n-k-i}{k-1-i}$ ($0\le i\le k-1$).
Since $2k\le n+1$, we have $n-k-i\ge k-1-i\ge0$, so none of these is $0$.

(2) \emph{$F_{M_k(n)}$ is surjective.} This follows from Corollary~\ref{cor:rankD}.

(3) \emph{$\ker F_{M_k(n)}=V^{(k)}_k$, and the action of $F_{D_k(n)}$.}
Let $\mathbb Q^{\binom{[n]}{k}}=\bigoplus_{i\le\min(k,n-k)}V_i^{(k)}$.
By Proposition~\ref{prop:johnson}(c), $F_{M_k(n)}$ maps each $V_i^{(k)}$ to $V_i^{(k-1)}$.
Since $F_{M_k(n)}$ is surjective and $\mathbb Q^{\binom{[n]}{k-1}}=\bigoplus_{i=0}^{k-1}V_i^{(k-1)}$,
for each $0\le i\le k-1$ the map $V_i^{(k)}\to V_i^{(k-1)}$ is a surjection between irreducible representations, hence an isomorphism.
Since $V_k^{(k-1)}=0$, $V_k^{(k)}$ is mapped to $0$ by $F_{M_k(n)}$.
Therefore $\ker F_{M_k(n)}=V_k^{(k)}$.
Here, when $2k=n+1$, we interpret $V_k^{(k)}=0$.
By Proposition~\ref{prop:johnson}(b), if $2k\le n$, then $F_{D_k(n)}$ acts on $V_k^{(k)}$ as multiplication by the scalar $(-1)^k\binom{n-2k}{0}=(-1)^k$.

(4) \emph{The kernel is $0$.} Put $\Gamma_k(n)=H_{k,\mathcal{B}_k}(B_n)(p)$.
Let $(\xi,\eta,\zeta)\in\ker F_{\Gamma_k(n)}$,
where $\xi,\eta,\zeta$ belong to the linear spaces spanned by $\mathcal{E}_k(n),\mathcal{F}_k(n),\mathcal{G}_k(n)$, respectively.
From the first block row of $\Gamma_k(n)$ we get $F_{M_k(n)}(\zeta)=0$, so $\zeta\in\ker F_{M_k(n)}=V_k^{(k)}$.
From the second block row we get $F_{C_k(n)}(\eta)=0$, so $\eta=0$ by (1).
From the third block row we get $F_{M_k(n)^{\mathsf T}}(\xi)+F_{D_k(n)}(\zeta)=0$.
Now $F_{M_k(n)^{\mathsf T}}(\xi)\in\operatorname{Im}F_{M_k(n)^{\mathsf T}}=(\ker F_{M_k(n)})^{\perp}$,
while by (3), $F_{D_k(n)}(\zeta)\in V_k^{(k)}=\ker F_{M_k(n)}$.
Thus these two vectors are orthogonal and sum to $0$, so both are $0$.
Since $F_{D_k(n)}$ is invertible on $V_k^{(k)}$, we get $\zeta=0$.
By (2), $F_{M_k(n)^{\mathsf T}}$ is injective, so $\xi=0$.
\end{proof}

\begin{proof}[Proof of the Main Theorem]
For $k=0$ we have $Q_0(1,1)=B_n(p)=1\ne0$.
For $1\le k\le(n+1)/2$ we use Proposition~\ref{prop:nondeg}.
Hence, by Lemma~\ref{lem:MW}, the map
$\times\ell^{\,n+1-2k}\colon\Aa_k\to\Aa_{n+1-k}$ is an isomorphism for all $0\le k\le(n+1)/2$, and $\Aa$ has the SLP.
(The point $p$ and the matrix $H_{k,\mathcal{B}_k}(B_n)(p)$ are rational, and a rational matrix that is non-singular over $\mathbb Q$ is non-singular over every field of characteristic $0$; hence this holds over every such field $\Kk$.)
\end{proof}


\begin{remark}
By a similar method as above,  we can also obtain that cycle matroid $\cm{K_{2,n}}$ and the uniform matroids $U_{r,n}$ satisfy the Maeno--Numata conjecture for every $1\le r\le n$. Here Yazawa's computation \cite{MR4348920} covers $K_{2,n}$ only for $n\le3$ (at most five vertices).
\end{remark}

\end{document}